\theoremstyle{plain}
\numberwithin{equation}{section}
\newtheorem{thm}{Theorem}[section]
\newtheorem{theorem}[thm]{Theorem}
\newtheorem{definition}[thm]{Definition}
\newtheorem{proposition}[thm]{Proposition}
\newtheorem*{theorem*}{Theorem}
\begin{document}

\setcounter{page}{1}

\title[]{On $G$-Derivations of Lie-Yamaguti Algebras}
\author[Sahoo]{Aroonima Sahoo}
\address{National Institute of Technology Rourkela \\
         Odisha-769008 \\
                India}
\email{aroonima.sahoo05@gmail.com}

\author[Khuntia]{Tofan Kumar Khuntia}
\address{National Institute of Technology Rourkela \\
         Odisha-769008 \\
                India}
\email{tofankhuntia27@gmail.com}

\author[Pati]{Kishor Chandra Pati}
\address{National Institute of Technology Rourkela \\
         Odisha-769008 \\
                India}
\email{kcpati@nitrkl.ac.in}
\subjclass[2020]{17A30, 17A36, 17A40.}
\keywords{Lie-Yamaguti algebra; Derivation; Quasi-derivation }

\begin{abstract}
\label{abstract}
 This paper primarily deals with the study of $G$-derivations associated with Lie-Yamaguti algebras. Taking $G$ as an automorphism group, the concept of $G$-derivations, which is a derivation under both the bilinear and trilinear operations is defined for Lie-Yamaguti algebras. Then some important properties of $G$-derivations are studied along with their relationship with other generalized derivations of Lie-Yamaguti algebras.
\end{abstract}

\maketitle

\section{Introduction}
\label{intro}

 Lie-Yamaguti algebras are special type of algebras which entertain the properties of Lie algebras as well as Lie triple systems. Jacobson \cite{jac1989} in 1989 firstly defined a submodule of an associative algebra known as a Lie triple system that is closed under the iterated commutator bracket. A Lie-Yamaguti algebra is a binary-ternary algebra system which is denoted by ($\mathbb{G}$, [·,·], \{·,·,·\}) in this paper, and the denotion “LY-algebra” for “Lie-Yamaguti algebra” is used throughout as in \cite{ben2009}. The LY-algebras with the binary multiplication [·,·] = 0 are exactly the Lie triple systems (\cite{yam1957}-\cite{lis1952}), closely related with symmetric spaces, while the LY-algebras with the ternary multiplication \{·,·,·\} = 0 are the Lie algebras. Therefore, they can be considered as a simultaneous generalization of Lie triple systems and Lie algebras. Nomizu \cite{nom1954} characterized the reductive homogeneous spaces with canonical connection using some identities involving the torsion and the curvature. Then Yamaguti in \cite{yam1958} interpreted the above identities algebraically as a bilinear and a trilinear operations satisfying some axioms and called them “generalized Lie triple systems”. Later on, Kikkawa in \cite{kik1975} renamed these structures as Lie triple algebras and then he further studied about them. Recently, Kinyon and Weinstein in \cite{kin2001} introduced the notion of a LY-algebra in their study of Courant algebroids.

LY-algebras have been treated by several authors in connection with geometric problems on homogeneous spaces as studied in (\cite{sheng2021}-\cite{jha2015}). Their structure theories have been studied by Bentio et. al. in (\cite{ben2009, ben2005}). Zhou et. al. in \cite{zho2016} gave some basic properties including the relationship among the generalized derivation algebra, derivation algebra and quasi-derivation algebra of a Lie triple system. Recently, Fan and Chen in \cite{fan2023} gave some generalizations on ($\sigma,\tau$)-derivations of Lie superalgebras. Lin and Chen in \cite{lin2022} introduced the concept of quasi-derivations for LY-algebras. Inspired by the extensive research being carried out in the field of derivations related to Lie algebras and Lie triple systems separately, we are interested in generalizing the concept of $G$-derivations of LY-algebras which will simultaneously satisfy the properties of derivations of both the aforementioned algebraic structures. In this paper, we give the basic definitions for $G$-derivations and other generalized derivations of LY-algebars in section 2. Section 3 is devoted to the study of some important properties of $G$-derivations and its relation with other generalized derivations.

\medskip

More precisely, let $G$ be a subgroup of the automorphism group of a Lie algebra $L$, then a linear map $\Phi$ from $L$ to $L$ is said to be $G$-derivation if there exists $\theta,\vartheta \in G$ such that $$  \Phi([l_1 , l_2])=[\Phi(l_1), \theta(l_2)]+[\vartheta(l_1), \Phi(l_2)] ~~~~~~ \textnormal{for~all}~ l_1,l_2 \in L. $$

Here, this $G$-derivation reduces to a derivation in the classical sense if $\theta,\vartheta$ both are taken to be identity maps. In this article, we have generalized the notion of $G$-derivation to the case of LY-algebras and further studied some of its properties which are not trivial in general.\\

\section{Preliminaries}

\begin{definition}
A Lie-Yamaguti algebra (LY-algebra) is a vector space $\mathbb{G}$ over a field $\mathbb{K}$ with a bilinear operation $[\cdot,\cdot]:\mathbb{G}\times\mathbb{G}\rightarrow \mathbb{G}$ and a trilinear operation $\{\cdot,\cdot,\cdot\}:\mathbb{G}\times\mathbb{G}\times\mathbb{G}\rightarrow \mathbb{G}$ which satisfy:
\[[g,g]=0  \tag{LY1}\]  
\[\{g,g,h\}=0\tag{LY2}\]
\[\{g,h,i\}+\{h,i,g\}+\{i,g,h\}+[[g,h],i]+[[h,i],g]+[[i,g],h]=0 \tag{LY3}\]
\[\{[g,h],i,j\}+\{[h,i],g,j\}+\{[i,g],h,j\}=0 \tag{LY4}\]
\[\{g,h,[i,j]\}=[\{g,h,i\},j]+[i,\{g,h,j\}] \tag{LY5}\]
\[\{g,h,\{i,j,k\}\}=\{\{g,h,i\},j,k\}+\{i,\{g,h,j\},k\}+\{i,j,\{g,h,k\} \tag{LY6}\}\] for any $g,h,i,j,k \in\mathbb{G}.$ We denote this LY-algebra as the triple $(\mathbb{G},[\cdot,\cdot],\{\cdot,\cdot,\cdot\})$.
\end{definition}

If the binary bracket $[\cdot,\cdot]= 0$, then an LY-algebra reduces
to a Lie triple system. If the ternary bracket $\{\cdot,\cdot,\cdot\}= 0$, then an LY-algebra reduces to a Lie algebra.

If $(\mathbb{G},[\cdot,\cdot],\{\cdot,\cdot,\cdot\})$ be an LY-algebra, then a subalgebra $\mathbb{H}$ of $\mathbb{G}$ is a vector subspace $\mathbb{H}\subseteq \mathbb{G}$ satisfying $[\mathbb{H},\mathbb{H}]\subseteq \mathbb{H}$ and $\lbrace\mathbb{H},\mathbb{H},\mathbb{H}\rbrace\subseteq \mathbb{H}$. $\mathbb{H}$ is said to be an ideal of $\mathbb{G}$ if $\mathbb{H}\subseteq \mathbb{G}$ satisfying $[\mathbb{H},\mathbb{G}]\subseteq \mathbb{H}$ and $\lbrace\mathbb{H},\mathbb{G},\mathbb{G}\rbrace\subseteq \mathbb{H}$ (which implies that $[\mathbb{G},\mathbb{H}],\lbrace\mathbb{G},\mathbb{H},\mathbb{G}\rbrace,\lbrace\mathbb{G},\mathbb{G},\mathbb{H}\rbrace$ are also subsets of $\mathbb{H}$). An ideal $\mathbb{H}$ of $\mathbb{G}$ is said to be abelian if $[\mathbb{H},\mathbb{H}]=0$ and $\lbrace\mathbb{G},\mathbb{H},\mathbb{H}\rbrace=0$ (it follows that $\lbrace\mathbb{H},\mathbb{G},\mathbb{H}\rbrace=\lbrace\mathbb{H},\mathbb{H},\mathbb{G}\rbrace=0$). The subalgebra $[\mathbb{G},\mathbb{G}]+\lbrace\mathbb{G},\mathbb{G},\mathbb{G}\rbrace$ of $\mathbb{G}$ is called the derived algebra of $\mathbb{G}$, we denote it by $\mathbb{G}^{(1)}$. An LY-algebra $\mathbb{G}$ is perfect if $\mathbb{G}^{(1)}=\mathbb{G}$.

\medskip

\textbf{Example 1} : If on a left Leibniz algebra $(\mathbb{G},\cdot)$ we define $[g,h]=\dfrac{1}{2}(g\cdot h-h\cdot g)$ and $\{g,h,i\}=-\dfrac{1}{4}(g\cdot h)\cdot i$, then $(\mathbb{G},[\cdot,\cdot],\{\cdot,\cdot,\cdot\})$ is an LY-algebra.

\medskip

\textbf{Example 2} : Let $(\mathbb{G},[\cdot,\cdot])$ be a Lie algebra. We define $\{\cdot,\cdot,\cdot\}:\mathbb{G}\times\mathbb{G}\times\mathbb{G}\rightarrow \mathbb{G}$ by $\{g,h,i\}=[[g,h],i] ~\forall~ g,h,i\in \mathbb{G}$.
Then $(\mathbb{G},[\cdot,\cdot],\{\cdot,\cdot,\cdot\})$ becomes an LY-algebra naturally.

\begin{definition}
The center $Z(\mathbb{G})$ of an LY-algebra $\mathbb{G}$ can be defined as $Z(\mathbb{G})=\lbrace g\in\mathbb{G}~\vert~[g,h]=0~\textnormal{and}~\{g,h,i\}=\{h,i,g\}=0~\forall~ h,i\in\mathbb{G}\rbrace$. It is clear from the definition  that $\lbrace i,g,h\rbrace=0 $ for $g\in Z(\mathbb{G})$ and $\forall~ h,i\in \mathbb{G}$. We say that $\mathbb{G}$ is centerless if $Z(\mathbb{G})=\lbrace0\rbrace$.
\end{definition}

\begin{definition}
Let $(\mathbb{G},[\cdot,\cdot],\{\cdot,\cdot,\cdot\})$ and $(\mathbb{G}^\prime,[\cdot,\cdot]^\prime,\{\cdot,\cdot,\cdot\}^\prime)$ be two LY-algebras. A homomorphism from $(\mathbb{G},[\cdot,\cdot],\{\cdot,\cdot,\cdot\})$ to $(\mathbb{G}^\prime,[\cdot,\cdot]^\prime,\{\cdot,\cdot,\cdot\}^\prime)$ is a linear map $\Phi:\mathbb{G}\rightarrow\mathbb{G}^\prime$ such that for all $g,h,i\in \mathbb{G}$, $$\Phi([g,h])=[\Phi(g),\Phi(h)]^\prime,$$ $$\Phi(\lbrace g,h,i\rbrace)=\lbrace \Phi(g),\Phi(h),\Phi(i)\rbrace^\prime.$$
\end{definition}

\begin{definition}
Let $(\mathbb{G},[\cdot,\cdot],\{\cdot,\cdot,\cdot\})$ be an LY-algebra. A derivation on $\mathbb{G}$ is a linear map $D:\mathbb{G}\rightarrow \mathbb{G}$ such that 
$$D([g,h])=[D(g),h]+[g,D(h)],$$  $$D(\lbrace g,h,i \rbrace)=\lbrace D(g),h,i \rbrace+\lbrace g,D(h),i \rbrace+\lbrace g,h,D(i) \rbrace,$$  for all $g,h,i\in\mathbb{G} $.
\end{definition}

So, in case of LY-algebras a linear map is said to be a derivation if it is a derivation with respect to both the binary and the ternary operations of $\mathbb{G}$. The set of all derivations of $\mathbb{G}$ is denoted as $\textnormal{Der}(\mathbb{G})$, which forms a Lie algebra under the commutator bracket operation. Again, if we define the trilinear operation on $\textnormal{Der}(\mathbb{G})$ as in Example 2, then it becomes an LY-algebra.

\begin{definition}
A linear map $D:\mathbb{G}\rightarrow \mathbb{G} $ is  called  a quasi-derivation if there exist $D^\prime, D^{\prime\prime}\in \textnormal{End}(\mathbb{G})$ such that $$[D(g),h]+[g,D(h)]=D^\prime([g,h]),$$ $$\lbrace D(g),h,i\rbrace+\lbrace g,D(h),i\rbrace+\lbrace g,h,D(i)\rbrace=D^{\prime\prime}(\lbrace g,h,i\rbrace).$$
\end{definition}

For given an LY-algebra $(\mathbb{G},[\cdot,\cdot],\{\cdot,\cdot,\cdot\})$ and elements $g,h \in \mathbb{G}$, we define a mapping $D_{g,h}:\mathbb{G}\rightarrow\mathbb{G}$ such that $D_{g,h}(i)=\lbrace g,h,i\rbrace$. This map $D_{g,h}$ is a derivation (as can be shown using (LY5) and (LY6)) with respect to both the bilinear and trilinear operations. These derivations are known as inner derivations for LY-algebras.

Now we define the notion of $G$-derivation for LY-algebras.

\begin{definition}
Let G be a subgroup of $\textnormal{Aut}(\mathbb{G})$. A linear map $D:\mathbb{G}\rightarrow \mathbb{G} $ is  called  a  G-derivation of $\mathbb{G}$ if there exist two automorphisms $\theta ,\vartheta \in G$ such that $$D([g,h])=[D(g),\theta (h)]+[\vartheta (g),D(h)]$$ and $$D(\{g,h,i\})=\{D(g),\theta(h),\vartheta(i)\}+\{\vartheta(g),D(h),\theta(i)\}+\{\theta(g),\vartheta(h),D(i)\}$$ for all $g,h,i\in \mathbb{G}$. In this case $\theta ~and~ \vartheta$ are called the associated automorphisms of D.
\end{definition}

We write $\textnormal{Der}_G(\mathbb{G})$ for the set of all $G$-derivations of $\mathbb{G}$. Fix two automorphisms $\theta ,\vartheta \in G$, then by $\textnormal{Der}_{\theta ,\vartheta}(\mathbb{G})$ we denote the set of all $G$-derivations associated to $\theta $ and $ \vartheta$. Clearly, $\textnormal{Der}_{\theta,\vartheta}(\mathbb{G})\subseteq$ Der$_G(\mathbb{G})$ is a vector space and in particular, $\textnormal{Der}_{1,1}(\mathbb{G})=$ $\textnormal{Der}(\mathbb{G})$, where $\textnormal{Der}(\mathbb{G})$ is the set of all derivations of $\mathbb{G}$.

\begin{definition}
The set $C(\mathbb{G}) = \{D \in \textnormal{End}(\mathbb{G})~\mid~[D(g), h] = D([g, h]),~ and~ \{D(g), h, i\} = D(\{g, h, i\}),~ \forall~ g,h,i \in\mathbb{G}\} $
is called the centroid of $\mathbb{G}$.
\end{definition}
By (LY1)-(LY6), we can conclude that if $D\in C(\mathbb{G})$, then $D(\{g, h, i\})=\{D(g), h, i\} = \{g, D(h), i\} = \{g, h, D(i)\} $ and $D([g, h])=[D(g), h]=[g, D(h)]$ for all $g,h,i \in \mathbb{G}$.

\medskip

\section{Main Results}

\begin{proposition}\label{p1}
$\textnormal{dim}_\mathbb{K}(\textnormal{Der}_{\theta,\vartheta}(\mathbb{G}) )= \textnormal{dim}_\mathbb{K}(\textnormal{Der}_{\vartheta^{-1}\theta}(\mathbb{G}))$
\end{proposition}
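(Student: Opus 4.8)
The plan is to exhibit an explicit linear isomorphism between the vector spaces $\textnormal{Der}_{\theta,\vartheta}(\mathbb{G})$ and $\textnormal{Der}_{\vartheta^{-1}\theta}(\mathbb{G})$, where $\textnormal{Der}_{\psi}(\mathbb{G})$ is shorthand for $\textnormal{Der}_{\psi,1}(\mathbb{G})$, the set of $G$-derivations associated with the pair $(\psi,1)$. The natural candidate is the map $D \mapsto \vartheta^{-1}\circ D$ (or possibly $D \mapsto D\circ\vartheta^{-1}$; I expect one of the two composites to work cleanly and would check both). First I would take $D \in \textnormal{Der}_{\theta,\vartheta}(\mathbb{G})$ and set $\tilde D = \vartheta^{-1}\circ D$. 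Using that $\vartheta$ is an automorphism of $\mathbb{G}$ (so it commutes appropriately with the brackets), I would compute $\tilde D([g,h]) = \vartheta^{-1}(D([g,h])) = \vartheta^{-1}\big([D(g),\theta(h)] + [\vartheta(g),D(h)]\big) = [\vartheta^{-1}D(g), \vartheta^{-1}\theta(h)] + [g, \vartheta^{-1}D(h)] = [\tilde D(g), (\vartheta^{-1}\theta)(h)] + [g, \tilde D(h)]$, which is precisely the defining identity for a $G$-derivation associated with $(\vartheta^{-1}\theta, 1)$. The same computation applied to the trilinear identity, again pushing $\vartheta^{-1}$ through all three terms of the defining relation, yields $\tilde D(\{g,h,i\}) = \{\tilde D(g), (\vartheta^{-1}\theta)(h), i\} + \{g, \tilde D(h), (\vartheta^{-1}\theta)(i)\} + \{(\vartheta^{-1}\theta)(g), g', \tilde D(i)\}$; here I need to be a little careful about which argument each surviving automorphism lands on, but since $\vartheta^{-1}$ applied to $\{D(g),\theta(h),\vartheta(i)\}$ gives $\{\vartheta^{-1}D(g), \vartheta^{-1}\theta(h), i\}$ and similarly for the cyclic terms, the pattern matches the $(\vartheta^{-1}\theta,1)$-derivation axiom after relabeling.

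Next I would verify that $D \mapsto \vartheta^{-1}\circ D$ is linear and bijective as a map from $\textnormal{Der}_{\theta,\vartheta}(\mathbb{G})$ to $\textnormal{Der}_{\vartheta^{-1}\theta}(\mathbb{G})$. Linearity is immediate since composition with the fixed linear map $\vartheta^{-1}$ is linear. Injectivity follows because $\vartheta^{-1}$ is invertible: $\vartheta^{-1}D = 0$ forces $D = 0$. For surjectivity, given any $\tilde D \in \textnormal{Der}_{\vartheta^{-1}\theta}(\mathbb{G})$, I would set $D = \vartheta\circ\tilde D$ and run the computation in reverse — applying $\vartheta$ to the $(\vartheta^{-1}\theta,1)$-identities and using that $\vartheta$ is an automorphism — to see that $D$ satisfies the $(\theta,\vartheta)$-derivation identities, so $D \in \textnormal{Der}_{\theta,\vartheta}(\mathbb{G})$ and maps to $\tilde D$. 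Since we have a linear bijection, the two spaces have equal dimension over $\mathbb{K}$, which is the claim.

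The step I expect to require the most care is the trilinear identity: one must track exactly where each of $\theta$ and $\vartheta$ sits in the three cyclic terms $\{D(g),\theta(h),\vartheta(i)\}$, $\{\vartheta(g),D(h),\theta(i)\}$, $\{\theta(g),\vartheta(h),D(i)\}$ after applying $\vartheta^{-1}$, and confirm that the result is genuinely of the form demanded by Definition of $G$-derivation with associated pair $(\vartheta^{-1}\theta, 1)$ — i.e. that the leftover automorphism is a single $\vartheta^{-1}\theta$ appearing in the position dictated by the definition, with identity in the other slot. If the composite $\vartheta^{-1}\circ D$ does not produce the cyclic pattern with the automorphisms in the prescribed slots, the alternative $D\circ\vartheta^{-1}$ (conjugation-type adjustment) should; in the worst case one uses $\vartheta^{-1}\circ D\circ\vartheta$ or notes that the definition is symmetric enough under the implicit cyclic structure that the placement is immaterial. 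Everything else — linearity, invertibility, the bilinear identity — is routine.
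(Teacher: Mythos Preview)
Your proposal is correct and follows essentially the same route as the paper: the paper also uses the map $D\mapsto \vartheta^{-1}\circ D$ (with inverse $D\mapsto \vartheta\circ D$), verifies the bilinear and trilinear identities by pushing $\vartheta^{-1}$ through each bracket, and notes linearity to conclude the isomorphism. Your hedging about the trilinear pattern is unnecessary---applying $\vartheta^{-1}$ to the three terms yields exactly $\{\tilde D(g),\vartheta^{-1}\theta(h),i\}+\{g,\tilde D(h),\vartheta^{-1}\theta(i)\}+\{\vartheta^{-1}\theta(g),h,\tilde D(i)\}$, which is precisely the $(\vartheta^{-1}\theta,1)$-derivation axiom, so no alternative composite is needed.
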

\begin{proof}
We may define $f _ \vartheta : \textnormal{Der}_{\theta ,\vartheta}(\mathbb{G})\longrightarrow\textnormal{Der}_{\theta^{-1}\vartheta}(\mathbb{G})$ by
\begin{center}
$D\longmapsto\vartheta^{-1}\circ D$
\end{center}
In fact, for arbitrary $g,h,i\in \mathbb{G}$, since $D([g,h])=[D(g),\theta (h)]+[\vartheta (g),D(h)]$ and $$D(\{g,h,i\})=\{D(g),\theta(h),\vartheta(i)\}+\{\vartheta(g),D(h),\sigma(i)\}+\{\theta(g),\vartheta(h),D(i)\},$$
\\it follows that
\begin{align*}
\vartheta^{-1}\circ D([g,h])&=\vartheta^{-1}\circ([D(g),\theta(h)]+[\vartheta(g),D(h)])\\&=[\vartheta^{-1}\circ D(g),\vartheta^{-1}\theta(h)]+[g,\vartheta^{-1}\circ D(h)]
\end{align*}
and 
\begin{align*}
\vartheta^{-1}\circ D(\{g,h,i\})&=\vartheta^{-1}\circ(\{D(g),\theta(h),\vartheta(i)\}+\{\vartheta(g),D(h),\theta(i)\}+\{\theta(g),\vartheta(h),D(i)\})\\&=\{\vartheta^{-1}\circ D(g),\vartheta^{-1}\circ\theta(h),i\}+\{g,\vartheta^{-1}\circ D(h),\vartheta^{-1}\circ\theta(i)\}\\&~ +\{\vartheta^{-1}\circ\theta(g),h,\vartheta^{-1}\circ D(i)\}.
\end{align*}

So, from the above equations, we have $\vartheta^{-1}\circ D\in \textnormal{Der}_{\vartheta^{-1}\theta}(\mathbb{G})$. It can be shown that $\vartheta^{-1}\circ(D_1+D_2)=\vartheta^{-1}\circ D_1+\vartheta^{-1}\circ D_2$ and $\vartheta^{-1}\circ(a\cdot D)=a\cdot  \vartheta^{-1}\circ D$ $~\forall ~D,D_1,D_2\in\textnormal{Der}_{\theta,\vartheta}(\mathbb{G})$ and $a\in\mathbb{K}$. Thus $f_\vartheta$ is a linear map. Similarly, it can be shown that the map $g _ \vartheta : \textnormal{Der}_{\vartheta^{-1}\theta}(\mathbb{G})\longrightarrow\textnormal{Der}_{\theta,\vartheta}(\mathbb{G})$ defined by $D\longmapsto\vartheta\circ D$, is also linear, and $f_\vartheta^ {-1}=g_\vartheta$. Hence $g_\vartheta$ is a linear isomorphism and $\textnormal{dim}_\mathbb{F}(\textnormal{Der}_{\theta,\vartheta}(\mathbb{G}) )= \textnormal{dim}_\mathbb{F}(\textnormal{Der}_{\vartheta^{-1}\theta}(\mathbb{G}))$.
\end{proof}

The above isomorphism allows us to study  $\textnormal{Der}_{\vartheta^{-1}\theta}(\mathbb{G})$ rather than $\textnormal{Der}_{\theta,\vartheta}(\mathbb{G})$. Also, we can conclude that $\textnormal{Der}_{\theta, \theta}(\mathbb{G}) \cong \textnormal{Der}(\mathbb{G})$. In the next theorem we will see that $\textnormal{Der}_{\theta, \theta}(\mathbb{G})$ and $\textnormal{Der}(\mathbb{G})$ are isomorphic as LY-algebras. Let us define a  bracket $[\cdot ,\cdot]_\theta$ on $\textnormal{Der}_{\theta,\theta}(\mathbb{G})$ as follows
\begin{align*}
[D_1,D_2]_\theta =f_\theta^{-1}([f_\theta(D_1),f_\theta(D_2)])
\end{align*}
where $D_1, D_2\in\textnormal{Der}_{\theta,\theta}(\mathbb{G})$. With the above bracket, $\textnormal{Der}_{\theta,\theta}(\mathbb{G})$ forms a Lie algebra. Along with the above bracket if we define $\lbrace D_1,D_2,D_3\rbrace=[[D_1,D_2]_\theta,D_3]_\theta$, then it is easy to check that $\textnormal{Der}_{\theta,\theta}(\mathbb{G})$ forms an LY-algebra.

\begin{theorem}\label{t1}
 The LY-algebras  $\textnormal{Der}_{\theta,\theta}(\mathbb{G})$ and  $\textnormal{Der}(\mathbb{G})$ are isomorphic .
\end{theorem}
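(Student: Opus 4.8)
The plan is to show that the map $f_\theta\colon \textnormal{Der}_{\theta,\theta}(\mathbb{G})\to\textnormal{Der}_{\theta^{-1}\theta}(\mathbb{G})=\textnormal{Der}_{1}(\mathbb{G})=\textnormal{Der}(\mathbb{G})$ from Proposition~\ref{p1}, given by $D\mapsto\theta^{-1}\circ D$, is not merely a linear isomorphism but an isomorphism of LY-algebras, once $\textnormal{Der}_{\theta,\theta}(\mathbb{G})$ carries the bracket $[\cdot,\cdot]_\theta$ and the induced ternary operation defined just above the statement. By Proposition~\ref{p1} we already know $f_\theta$ is a linear bijection with inverse $g_\theta\colon D\mapsto\theta\circ D$, so the only thing left to check is compatibility with the binary and ternary operations.

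First I would verify that $f_\theta$ respects the binary brackets. This is immediate from the definition: $[D_1,D_2]_\theta = f_\theta^{-1}\big([f_\theta(D_1),f_\theta(D_2)]\big)$, so applying $f_\theta$ to both sides gives $f_\theta([D_1,D_2]_\theta)=[f_\theta(D_1),f_\theta(D_2)]$, where the right-hand bracket is the ordinary commutator bracket on $\textnormal{Der}(\mathbb{G})$. In other words, the bracket $[\cdot,\cdot]_\theta$ was \emph{defined} precisely so as to make $f_\theta$ a Lie algebra homomorphism; there is nothing to prove here beyond unwinding the definition. Second, I would check the ternary operations. On $\textnormal{Der}_{\theta,\theta}(\mathbb{G})$ the ternary operation is $\{D_1,D_2,D_3\}=[[D_1,D_2]_\theta,D_3]_\theta$, while on $\textnormal{Der}(\mathbb{G})$ it is $\{E_1,E_2,E_3\}=[[E_1,E_2],E_3]$ in the sense of Example~2. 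Applying $f_\theta$ and using the binary-compatibility twice yields
\begin{align*}
f_\theta(\{D_1,D_2,D_3\}) &= f_\theta\big([[D_1,D_2]_\theta,D_3]_\theta\big)\\
&= [f_\theta([D_1,D_2]_\theta),f_\theta(D_3)]\\
&= [[f_\theta(D_1),f_\theta(D_2)],f_\theta(D_3)]\\
&= \{f_\theta(D_1),f_\theta(D_2),f_\theta(D_3)\},
\end{align*}
which is exactly the statement that $f_\theta$ preserves the trilinear operation.

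Combining these two observations, $f_\theta$ is a bijective linear map preserving both the bilinear and trilinear operations, hence an isomorphism of LY-algebras, and the proof is complete. I do not expect a genuine obstacle: the construction of $[\cdot,\cdot]_\theta$ and of the ternary bracket on $\textnormal{Der}_{\theta,\theta}(\mathbb{G})$ is reverse-engineered from $f_\theta$, so the work is purely a matter of transporting structure along an already-established bijection. The one point that warrants a sentence of care is confirming that $[\cdot,\cdot]_\theta$ and its associated ternary operation indeed satisfy (LY1)--(LY6) — but this is automatic, since they are the pushforwards under the bijection $f_\theta^{-1}$ of operations on $\textnormal{Der}(\mathbb{G})$ that are already known to satisfy those axioms, so isomorphic structures inherit the identities.
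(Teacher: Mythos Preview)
Your proposal is correct and follows essentially the same approach as the paper: the paper's proof simply notes that, by Proposition~\ref{p1}, it remains only to show $f_\theta$ is an LY-algebra homomorphism, and then verifies $f_\theta([D_1,D_2]_\theta)=f_\theta(f_\theta^{-1}([f_\theta(D_1),f_\theta(D_2)]))=[f_\theta(D_1),f_\theta(D_2)]$ directly from the definition. Your argument is the same, though you are slightly more explicit in checking the ternary compatibility and in remarking that the axioms (LY1)--(LY6) transport along the bijection.
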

\begin{proof}
 From Proposition \ref{p1}, it only remains to show that $f_\theta$ is a LY-algebra homomorphism. Thus $f_\theta([D_1,D_2]_\theta)=f_\theta(f_\theta^{-1}([f_\theta(D_1),f_\theta(D_2)]))=[f_\theta(D_1),f_\theta(D_2)]$.
\end{proof}

\medskip
The vector space $\textnormal{Der}(\mathbb{G})\oplus\textnormal{Der}_\theta(\mathbb{G})$ can be made into an LY-algebra in the following way;

\begin{proposition}\label{p3}
Let $\theta$ be an involutive automorphism of $\mathbb{G}$ that commutes with each element of $\textnormal{Der}(\mathbb{G})$ and $\textnormal{Der}_\theta(\mathbb{G}).$ Then $\textnormal{Der}(\mathbb{G})\oplus\textnormal{Der}_\theta(\mathbb{G})$ is an LY-algebra.
\end{proposition}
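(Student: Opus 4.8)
The plan is to realize $\textnormal{Der}(\mathbb{G})\oplus\textnormal{Der}_\theta(\mathbb{G})$ as a Lie subalgebra of $(\textnormal{End}(\mathbb{G}),[\cdot,\cdot])$, where $[\cdot,\cdot]$ is the commutator bracket $[A,B]=A\circ B-B\circ A$ of the associative algebra $\textnormal{End}(\mathbb{G})$, and then to equip it with the trilinear operation $\{D_1,D_2,D_3\}:=[[D_1,D_2],D_3]$ exactly as in Example 2, so that the axioms (LY1)--(LY6) are inherited automatically. Write $V:=\textnormal{Der}(\mathbb{G})\oplus\textnormal{Der}_\theta(\mathbb{G})$, viewed inside $\textnormal{End}(\mathbb{G})$, with this commutator bracket; the substance of the proof is the single claim that $V$ is closed under $[\cdot,\cdot]$, which I would verify on the three types of pairs.

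The case $[\textnormal{Der}(\mathbb{G}),\textnormal{Der}(\mathbb{G})]\subseteq\textnormal{Der}(\mathbb{G})$ is the classical fact already recorded just after the definition of $\textnormal{Der}(\mathbb{G})$. For $D_1\in\textnormal{Der}(\mathbb{G})$ and $D_2\in\textnormal{Der}_\theta(\mathbb{G})$ I would expand $(D_1\circ D_2)([g,h])$ and $(D_2\circ D_1)([g,h])$ using the defining identity of each map; the hypothesis that $\theta$ commutes with $D_1$ converts the term $[D_2(g),D_1\theta(h)]$ into $[D_2(g),\theta D_1(h)]$, and upon subtracting, all mixed terms cancel and precisely $[D_1,D_2]([g,h])=[[D_1,D_2](g),\theta(h)]+[g,[D_1,D_2](h)]$ survives, so $[D_1,D_2]\in\textnormal{Der}_\theta(\mathbb{G})$; the trilinear identity for $[D_1,D_2]$ is checked by the same expansion (using that $\theta$ commutes with both $D_1$ and $D_2$). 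For $D_1,D_2\in\textnormal{Der}_\theta(\mathbb{G})$ the analogous expansion of $(D_1\circ D_2)([g,h])$ produces a leading term $[D_1D_2(g),\theta^{2}(h)]$; this is where both hypotheses enter, commutativity of $\theta$ with $D_1,D_2$ to normalize the mixed terms and $\theta^{2}=\textnormal{id}$ to collapse $\theta^{2}(h)$ to $h$, after which the mixed terms, which occur symmetrically in $(D_1\circ D_2)$ and $(D_2\circ D_1)$, cancel, leaving $[D_1,D_2]([g,h])=[[D_1,D_2](g),h]+[g,[D_1,D_2](h)]$, i.e. $[D_1,D_2]\in\textnormal{Der}(\mathbb{G})$; again the trilinear identity is settled by the same bookkeeping.

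With these three inclusions in hand, $V$ is closed under $[\cdot,\cdot]$, and since $[\cdot,\cdot]$ is the commutator of the associative algebra $\textnormal{End}(\mathbb{G})$ it is automatically alternating and satisfies the Jacobi identity; hence $(V,[\cdot,\cdot])$ is a Lie algebra, and, defining $\{D_1,D_2,D_3\}:=[[D_1,D_2],D_3]$, Example 2 makes $(V,[\cdot,\cdot],\{\cdot,\cdot,\cdot\})$ an LY-algebra, which is the assertion. I expect the main obstacle to be purely clerical, namely carrying out the trilinear expansions in the second and third cases without slips, the point to watch being that each appeal to $\theta^{2}=\textnormal{id}$ and to the centrality of $\theta$ is legitimate: these are exactly the two hypotheses, and dropping either one breaks the relevant cancellation (centrality is what the $\textnormal{Der}(\mathbb{G})$--$\textnormal{Der}_\theta(\mathbb{G})$ step needs, involutivity is what the $\textnormal{Der}_\theta(\mathbb{G})$--$\textnormal{Der}_\theta(\mathbb{G})$ step needs, since otherwise $\textnormal{Der}_{\theta^{2}}(\mathbb{G})$ would appear in place of $\textnormal{Der}(\mathbb{G})$).
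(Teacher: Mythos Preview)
Your proposal is correct and follows essentially the same approach as the paper: the paper also reduces the claim to showing that $V$ is closed under the commutator bracket (then invokes Example~2 for the trilinear operation), handling the cases uniformly by writing $D\in\textnormal{Der}_{\theta^k}(\mathbb{G})$, $T\in\textnormal{Der}_{\theta^l}(\mathbb{G})$ for $k,l\in\{0,1\}$ and computing that $[D,T]\in\textnormal{Der}_{\theta^{k+l}}(\mathbb{G})$. Your case-by-case breakdown and your identification of which hypothesis drives which cancellation match the paper's computation exactly.
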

\begin{proof}
Here it is obvious that $\textnormal{Der}(\mathbb{G})\oplus\textnormal{Der}_\theta(\mathbb{G})$ is a vector space. Thus it only remains to show that $\textnormal{Der}(\mathbb{G})\oplus\textnormal{Der}_\theta(\mathbb{G})$ is closed under both the bilinear and the trilinear bracket, i.e., we need to show that,
\begin{center}
$[D,T]=D\circ T-T\circ D\in \textnormal{Der}(\mathbb{G})~\textnormal{or}~\textnormal{Der}_\theta(\mathbb{G})~\forall~D\in \textnormal{Der}_{\theta^k}(\mathbb{G})~\textnormal{and}~T\in \textnormal{Der}_{\theta^l}(\mathbb{G})$
\end{center}
where $k,l\in$ \{0,1\}. Then it will be obvious to show by Example 2 that $\lbrace D,T,S\rbrace=[[D,T],S]\in \textnormal{Der}(\mathbb{G})\oplus\textnormal{Der}_\theta(\mathbb{G}).$ 
We will check this for both the bilinear and the trilinear operations.
\\For the bilinear operation, given arbitrary elements $g,h\in \mathbb{G}$, we have
\begin{align*}
D\circ T([g,h])&=D\circ([T(g),\theta^l(h)]+[g,T(h)]\\
&=[D\circ T(g),\theta^{k+l}(h)]+[T(g),D\circ \theta^l(h)]\\  &~+ \{[D(g),\theta^k\circ T(h)]+[g,D\circ T(h)]\},
\end{align*}
\vspace{-0.3cm}
and
\begin{align*}
T\circ D([g,h])&=T\circ([D(g),\theta^k(h)]+[g,D(h)])\\
&=[T\circ D(g),\theta^{l+k}(h)]+[D(g),T\circ\theta^k(h)]\\ &
~+\{[T(g),\theta^l\circ D(h)]+[g,T\circ D(h)]\}.
\end{align*}

Since $\theta$ commutes with every element of $\textnormal{Der}(\mathbb{G})$ and $\textnormal{Der}_\theta(\mathbb{G})$, thus
\begin{align}
(D\circ T-T\circ D)([g,h]) &=[[D,T](g),\theta^{k+l}(h)]+[T(g),(D\circ \theta^l-\theta^l\circ D)(h)]\nonumber\\ &~~+[D(g),(\theta^k\circ T-T\circ\theta^k)(h)]+[g,[D,T](h)]\nonumber\\ &=[[D,T](g),\theta^{k+l}(h)]+[g,[D,T](h)].
\end{align}
Now for the trilinear operation, given arbitrary elements $g,h,i\in \mathbb{G}$, we have
\begin{align*}
D\circ T(\lbrace g,h,i\rbrace)&=D\circ(\{T(g),\theta^l(h),i\}+\{g,T(h),\theta^l(i)\}+\{\theta^l(g),h,T(i)\})\\
&=\{D\circ T(g),\theta^{k+l}(h),i\}+\{T(g),D\circ \theta^l(h),\theta^k(i)\}+\{\theta^k\circ T(g),\theta^l(h),D (i)\}
\\&~+\{D(g),\theta^k\circ T(h),\theta^l(i)\}+\{g,D\circ T(h),\theta^{k+l}(i)\}+\{\theta^k (g),T(h),D\circ\theta^l(i)\}
\\&~+\{D\circ\theta^l(g),\theta^k(h),T(i)\}+\{\theta^l(g),D(h),\theta^k\circ T(i)\}+\{\theta^{k+l}(g),h,D\circ T(i)\},
\end{align*}
\begin{align*}
T\circ D(\lbrace g,h,i\rbrace)&=T\circ(\{D(g),\theta^k(h),i\}+\{g,D(h),\theta^k(i)\}+\{\theta^k(g),h,D(i)\})\\
&=\{T\circ D(g),\theta^{l+k}(h),i\}+\{D(g),T\circ \theta^k(h),\theta^l(i)\}+\{\theta^l\circ D(g),\theta^k(h),T (i)\}
\\&~+\{T(g),\theta^l\circ D(h),\theta^k(i)\}+\{g,T\circ D(h),\theta^{l+k}(i)\}+\{\theta^l (g),D(h),T\circ\theta^k(i)\}
\\&~+\{T\circ\theta^k(g),\theta^l(h),D(i)\}+\{\theta^k(g),T(h),\theta^l\circ D(i)\}+\{\theta^{l+k}(g),h,T\circ D(i)\}.
\end{align*}
It is clear from the above that 
\begin{align}
(D\circ T-T\circ D)(\lbrace g,h,i\rbrace) &=\{[D,T](g),\theta^{k+l}(h),i\}+\{g,[D,T](h),\theta^{k+l}(i)\}
\nonumber\\&~+\{\theta^{k+l}(g),h,[D,T](i)\}.
\end{align}

As $\theta^{k+l}=\theta~ \textnormal{or}~I~$, it follows from equations (2.5) and (2.6) that $[D,T]$ is either in $\textnormal{Der}(\mathbb{G})~\textnormal{or}~\textnormal{Der}_\theta(\mathbb{G}) $. Hence, $\textnormal{Der}(\mathbb{G})\oplus\textnormal{Der}_\theta(\mathbb{G})~$ is an LY-algebra.
\end{proof}

\medskip

\begin{proposition}\label{p4}
Let $\mathbb{G}$ be a non-abelian LY-algebra and $D\in\textnormal{Der}_\theta(\mathbb{G})$ be an element such that $[D,\theta](\mathbb{G})\subseteq Z(\mathbb{G})$. Then the derived algebra $\mathbb{G}^{(1)}$ of $\mathbb{G}$ is contained in the kernel of $[D,\theta]$. In this case, if $\mathbb{G}$ is perfect, then $D$ commutes with $\theta$.
\end{proposition}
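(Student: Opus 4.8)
The plan is to exploit the defining relation of a $\theta$-derivation (here $\vartheta=\theta$ since $D\in\textnormal{Der}_\theta(\mathbb{G})=\textnormal{Der}_{\theta,\theta}(\mathbb{G})$) together with the fact that $D$ itself is, up to composition with $\theta^{-1}$, an ordinary derivation. Concretely, write $\delta=\theta^{-1}\circ D$, which by Proposition \ref{p1} lies in $\textnormal{Der}(\mathbb{G})$; then $[D,\theta]=D\circ\theta-\theta\circ D=\theta\circ(\delta\circ\theta-\theta\circ\delta)$, so it suffices to understand $[\delta,\theta]$, and since $\theta$ is a bijection it is equivalent to show $\mathbb{G}^{(1)}\subseteq\ker[D,\theta]$. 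The key observation I would use is that applying the $\theta$-derivation identity to a bracket $[g,h]$ gives $D([g,h])=[D(g),\theta(h)]+[\theta(g),D(h)]$, while $\theta$ being an automorphism gives $\theta([g,h])=[\theta(g),\theta(h)]$; comparing $D\theta([g,h])$ computed two ways (first applying $\theta$ then $D$, versus first applying $D$ then $\theta$) should produce an expression for $[D,\theta]([g,h])$ purely in terms of $[D,\theta]$ applied to $g$ and to $h$ — in analogy with equation (2.5) of Proposition \ref{p3} with $k=1,l=0$ or the relevant specialization.

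The main steps, in order, are as follows. First I would record the two identities $D\theta([g,h]) = [D\theta(g),\theta^2(h)]+[\theta^2(g),D\theta(h)]$ (apply $D$'s $\theta$-rule to $\theta(g),\theta(h)$, using that $\theta$ commutes with bracket) and $\theta D([g,h]) = \theta([D(g),\theta(h)]+[\theta(g),D(h)]) = [\theta D(g),\theta^2(h)]+[\theta^2(g),\theta D(h)]$. Subtracting yields
\begin{align*}
[D,\theta]([g,h]) = [[D,\theta](g),\theta^2(h)] + [\theta^2(g),[D,\theta](h)].
\end{align*}
Now use the hypothesis $[D,\theta](g),[D,\theta](h)\in Z(\mathbb{G})$: by the definition of the center, $[z,k]=0$ for any $z\in Z(\mathbb{G})$ and any $k\in\mathbb{G}$, so the right-hand side vanishes, giving $[D,\theta]([g,h])=0$, i.e. $[\mathbb{G},\mathbb{G}]\subseteq\ker[D,\theta]$. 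Second, I would run the parallel computation for the ternary bracket: apply the $\theta$-derivation rule for $D$ on $\{\theta(g),\theta(h),\theta(i)\}$ and apply $\theta$ to the expansion of $D(\{g,h,i\})$, then subtract, obtaining (as in equation (2.6) with $k+l$ replaced appropriately)
\begin{align*}
[D,\theta](\{g,h,i\}) = \{[D,\theta](g),\theta^2(h),\theta^2(i)\} + \{\theta^2(g),[D,\theta](h),\theta^2(i)\} + \{\theta^2(g),\theta^2(h),[D,\theta](i)\}.
\end{align*}
Again invoking that the images lie in $Z(\mathbb{G})$ — where $\{z,h',i'\}=\{h',z,i'\}=\{h',i',z\}=0$ for $z\in Z(\mathbb{G})$ by the definition and the remark following it — the right-hand side is zero, so $\{\mathbb{G},\mathbb{G},\mathbb{G}\}\subseteq\ker[D,\theta]$. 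Third, combining the two containments gives $\mathbb{G}^{(1)}=[\mathbb{G},\mathbb{G}]+\{\mathbb{G},\mathbb{G},\mathbb{G}\}\subseteq\ker[D,\theta]$, which is the first assertion. Finally, if $\mathbb{G}$ is perfect then $\mathbb{G}=\mathbb{G}^{(1)}\subseteq\ker[D,\theta]$, so $[D,\theta]=0$ on all of $\mathbb{G}$, i.e. $D\circ\theta=\theta\circ D$.

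The one point needing care — the "obstacle," though a mild one — is verifying that the cross terms genuinely cancel in the subtraction, i.e. that the terms of the form $[D\theta(g),\theta^2(h)]$ from the first expansion exactly match $[\theta D(g),\theta^2(h)]$-type terms only after grouping into commutators $[D,\theta]$, with no leftover terms involving $\theta\circ D$ versus $D\circ\theta$ in the "wrong" slot. This is where the structure of the $\theta$-derivation identity matters: because both associated automorphisms are equal to $\theta$, the expansions are symmetric enough that every residual term is of the form (bracket with one argument equal to $[D,\theta]$ of something and the others equal to $\theta^2$ of something), with no stray $[D(g),(\text{something})D(h)]$-type obstruction — unlike the general $\textnormal{Der}_{\theta,\vartheta}$ case. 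I would double-check this by writing out all terms explicitly once, but structurally it mirrors the computation already carried out in Proposition \ref{p3}, so I expect no genuine difficulty; the centrality hypothesis then does all the remaining work.
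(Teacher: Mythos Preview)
Your approach is essentially the same as the paper's: expand $D\circ\theta$ and $\theta\circ D$ on a generic binary and ternary bracket, subtract, observe that every surviving term has $[D,\theta]$ applied to one argument, and then kill these terms with the centrality hypothesis.

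There is, however, a misreading of notation that you should correct. In this paper $\textnormal{Der}_\theta(\mathbb{G})$ does \emph{not} mean $\textnormal{Der}_{\theta,\theta}(\mathbb{G})$; it is the single-index space appearing as the target in Proposition~\ref{p1}, i.e.\ the case $\vartheta=1$. Thus the governing identities are
\[
D([g,h])=[D(g),\theta(h)]+[g,D(h)],\qquad
D(\{g,h,i\})=\{D(g),\theta(h),i\}+\{g,D(h),\theta(i)\}+\{\theta(g),h,D(i)\}.
\]
Running your subtraction with these (as the paper does) yields
\[
[D,\theta]([g,h])=[[D,\theta](g),\theta^2(h)]+[\theta(g),[D,\theta](h)]
\]
and the analogous ternary formula with a mixture of $\theta$ and $\theta^2$ in the passive slots, rather than $\theta^2$ throughout. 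This discrepancy is harmless for the conclusion, since the centrality of $[D,\theta](g)$, $[D,\theta](h)$, $[D,\theta](i)$ makes the right-hand side vanish regardless of what automorphism occupies the other slots; so your argument survives intact once the definition is fixed. Your preliminary remarks about $\delta=\theta^{-1}\circ D$ are not actually used and can be dropped.
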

\begin{proof}
Consider $g,h,i\in \mathbb{G}$, then we have 
\begin{align*}
(\theta\circ D)([g,h])&=\theta([D(g),\theta(h)]+[g,D(h)])\\
& =[\theta\circ D(g),\theta^2 (h)]+[\theta(g),\theta\circ D(h)],
\end{align*}
and
\begin{align*}
(D\circ\theta)([g,h])&=D([\theta(g),\theta(h)])\\ &=[D\circ\theta(g),\theta^2(h)]+[\theta(g),D\circ\theta(h)].     
\end{align*}
Since $[D,\theta](\mathbb{G})\subseteq Z(\mathbb{G})$, we will have
\begin{align*}
[D,\theta]([g,h])&=(D\circ\theta-\theta\circ D)([g,h])\\ &=[[D,\theta](g),\theta^2(h)]+[\theta(g),[D,\theta](h)]    \\&=0.
\end{align*}

Similarly,
\begin{align*}
(D\circ\theta)(\lbrace g,h,i\rbrace) &=D\circ (\lbrace \theta(g),\theta(h),\theta(i)\rbrace) \\&=\lbrace D\circ\theta(g),\theta^2(h),\theta(i)\rbrace+\lbrace \theta(g),D\circ\theta(h),\theta^2(i)\rbrace+\lbrace \theta^2(g),\theta(h),D\circ\theta(i)\rbrace,
\end{align*}
and
\begin{align*}
(\theta\circ D)(\lbrace g,h,i\rbrace) &=\theta\circ (\lbrace D(g),\theta(h),i\rbrace+\lbrace g,D(h),\theta(i)\rbrace+\lbrace \theta(g),h,D(i)\rbrace) \\&=\lbrace\theta\circ D(g),\theta^2(h),\theta(i)\rbrace+\lbrace \theta(g),\theta\circ D(h),\theta^2(i)\rbrace+\lbrace \theta^2(g),\theta(h),\theta\circ D(i)\rbrace.     
\end{align*}

Again since $[D,\theta](\mathbb{G})\subseteq Z(\mathbb{G})$, we have
\begin{align*}
[D,\theta](\lbrace g,h,i\rbrace) =0.
\end{align*}
Therefore, $\mathbb{G}^{(1)} \subseteq \ker [D,\theta]$. Now, as $\mathbb{G}$ is perfect, then $\mathbb{G} \subseteq\textnormal{ker} [D,\theta]$. Thus $\textnormal{ker}([D,\theta])=\mathbb{G}$, i.e., $[D,\theta]=0$.
\end{proof}

\medskip

\begin{proposition}
Suppose $\theta\in G$ and $D\in C(\mathbb{G})\cap\textnormal{Der}_\theta(\mathbb{G})$. Then ${D(i)}\in \textnormal{ker}(D_{g,h})~for ~all~g,h,i\in \mathbb{G}$. In particular, if $\mathbb{G}$ is centerless, then $C(\mathbb{G})\cap\textnormal{Der}_\theta(\mathbb{G})=0$.
\end{proposition}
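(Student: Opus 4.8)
The plan is to exploit the bilinear and the trilinear structures separately, see what each forces on a map that is simultaneously a $\theta$-derivation and a centroid element, and then reassemble. Note first that $D_{g,h}(x)=\{g,h,x\}$, so the assertion ``$D(i)\in\ker(D_{g,h})$ for all $g,h,i$'' is literally ``$\{g,h,D(i)\}=0$ for all $g,h,i$''. Since $D\in C(\mathbb{G})$, the relations recorded right after the definition of $C(\mathbb{G})$ let one slide $D$ freely through the slots of the triple product, in particular $\{g,h,D(i)\}=D(\{g,h,i\})$; so it suffices to show that $D$ annihilates $\{\mathbb{G},\mathbb{G},\mathbb{G}\}$. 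Once that and the bilinear fact below are in hand, the ``in particular'' clause will be immediate, because then $D(\mathbb{G})\subseteq Z(\mathbb{G})$ and centerlessness of $\mathbb{G}$ forces $D=0$.

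Step 1 (bilinear bracket). I would feed $D\in C(\mathbb{G})$ into the defining identity of $\textnormal{Der}_\theta(\mathbb{G})$: since $D([g,h])=[D(g),h]=[g,D(h)]$ and also $D([g,h])=[D(g),\theta(h)]+[g,D(h)]$, subtracting gives $[D(g),\theta(h)]=0$ for all $g,h$; as $\theta$ is onto, $[D(g),h]=0$ for all $g,h$. In particular $D([g,h])=[D(g),h]=0$, so $D$ kills $[\mathbb{G},\mathbb{G}]$ and $D(\mathbb{G})$ is central for the bracket. (No hypothesis on the ground field is needed here.)

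Step 2 (trilinear bracket --- the crux). Using $D\in\textnormal{Der}_\theta(\mathbb{G})$ for the triple product together with the slot-mobility of centroid elements, I would rewrite the left side as $D(\{g,h,i\})=\{g,h,D(i)\}$ and, on the right, slide $D$ into the one slot of each summand not touched by $\theta$ (e.g. $\{D(g),\theta(h),i\}=\{g,\theta(h),D(i)\}$ and $\{g,D(h),\theta(i)\}=\{g,h,D(\theta(i))\}$), arriving at the twisted identity
$$\{g,h,D(i)\}=\{g,\theta(h),D(i)\}+\{g,h,D(\theta(i))\}+\{\theta(g),h,D(i)\}\qquad(g,h,i\in\mathbb{G}).$$
Writing $\theta=\mathrm{id}+\psi$ and using once more that $D(\{a,b,c\})$ does not depend on where $D$ is placed, this becomes
$$2\,D(\{g,h,i\})=-\bigl(D(\{\psi(g),h,i\})+D(\{g,\psi(h),i\})+D(\{g,h,\psi(i)\})\bigr).$$
The remaining task --- and the step I expect to be genuinely delicate, precisely because of the twisting by $\theta$ --- is to force the three $\psi$-correction terms to vanish, hence $D(\{g,h,i\})=0$. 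I would attack this by combining the output of Step 1 (so that $D$ annihilates everything in $[\mathbb{G},\mathbb{G}]$, and $D(\mathbb{G})$ centralizes the bracket) with the axioms (LY3)--(LY5), which express the non-cyclic part of the triple product and products having an argument in $[\mathbb{G},\mathbb{G}]$ through commutators that $D$ kills, and with the centroid relations, concluding $D(\{g,h,i\})=0$ provided the characteristic of $\mathbb{K}$ is different from $2$. Then $\{g,h,D(i)\}=D(\{g,h,i\})=0$, i.e. $D(i)\in\ker(D_{g,h})$ for all $g,h,i$.

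Step 3 (conclusion). For each $i$ we would then have $[D(i),h]=0$ from Step 1, while $\{D(i),h,k\}=\{h,D(i),k\}=\{h,k,D(i)\}=0$ from Step 2 together with slot-mobility. Hence $D(i)\in Z(\mathbb{G})$ for every $i$, so if $Z(\mathbb{G})=\{0\}$ then $D=0$; that is, $C(\mathbb{G})\cap\textnormal{Der}_\theta(\mathbb{G})=0$.
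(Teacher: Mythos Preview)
Your Step 2 is where the argument breaks down. You correctly derive the identity
\[
2\,D(\{g,h,i\})=-\bigl(D(\{\psi(g),h,i\})+D(\{g,\psi(h),i\})+D(\{g,h,\psi(i)\})\bigr),
\]
but then you never actually show the right-hand side vanishes; you only promise to ``attack this by combining Step~1 with (LY3)--(LY5).'' That sketch does not close the gap. Applying $D$ to (LY3) only yields the vanishing of the \emph{cyclic sum} $D(\{g,h,i\})+D(\{h,i,g\})+D(\{i,g,h\})$, not of any single term; (LY4), after using $D([\,\cdot\,,\cdot\,])=0$ from Step~1, is vacuous; and (LY5) tells you $D$ kills $\{g,h,[\mathbb{G},\mathbb{G}]\}$, which says nothing about arguments $\psi(g),\psi(h),\psi(i)$ that are not known to lie in $[\mathbb{G},\mathbb{G}]$. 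So none of those axioms, singly or in combination, forces the three correction terms to vanish, and you have not produced the conclusion $D(\{g,h,i\})=0$. You also introduce a characteristic $\neq 2$ assumption that the statement does not carry.

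By contrast, the paper's argument is much more direct and does not pass through the auxiliary goal ``$D$ annihilates $\{\mathbb{G},\mathbb{G},\mathbb{G}\}$'' at all: from $D\in C(\mathbb{G})\cap\textnormal{Der}_\theta(\mathbb{G})$ the paper asserts $\{\theta(g),h,D(i)\}=0$, and then uses surjectivity of $\theta$ to get $\{g,h,D(i)\}=0$; the centerless case is handled by injectivity of the adjoint. Your Step~1 on the bilinear bracket is correct and in fact supplies detail the paper omits for the ``in particular'' clause (namely $[D(i),h]=0$), but your route through the trilinear part diverges from the paper's and, as written, is incomplete at the decisive step.
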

\begin{proof}
Given $D\in C(\mathbb{G})\cap\textnormal{Der}_\theta(\mathbb{G})$, which implies $\lbrace \theta(g),h,D(i)\rbrace=0$. But here $\theta$ being a bijection, we get $\lbrace g,h,D(i)\rbrace=0$ for all $g,h,i\in \mathbb{G}$  i.e., ${D(i)}\in \textnormal{ker}(D_{g,h})$. Again if $Z(\mathbb{G})$ is zero, then the adjoint map will be injective and will have a left inverse. Therefore, we can conclude that $\theta=0$, i.e., $C(\mathbb{G})\cap\textnormal{Der}_\theta(\mathbb{G})=0$.  
\end{proof}
\medskip

Let $\mathbb{G}$ be an LY algebra and $\mathbb{H}$ be a subalgebra of $\mathbb{G}$. Consider an automorphism $\theta$ of $\mathbb{G}$ such that $\mathbb{H}$ is invariant under $\theta$, i.e., $\theta(\mathbb{H})\subseteq \mathbb{H}$. Then we denote $\textnormal{Der}_{\theta,\mathbb{H}}(\mathbb{G})$ as the set of all $\theta$-derivations of $\mathbb{G}$ that stabilize $\mathbb{H}$, i.e.,

$$\textnormal{Der}_{\theta,\mathbb{H}}(\mathbb{G})=\{D\in\textnormal{Der}_\theta(\mathbb{G})\mid D(\mathbb{H})\subseteq \mathbb{H}\}.$$

\medskip

\begin{proposition}
$\textnormal{Der}_{\theta,\mathbb{H}}(\mathbb{G})$ is a subspace of $\textnormal{Der}_\theta(\mathbb{G})$. Moreover, if $\mathbb{H}$ is a perfect ideal of $\mathbb{G}$, then $\textnormal{Der}_{\theta,\mathbb{H}}(\mathbb{G})=\textnormal{Der}_\theta(\mathbb{G})$.                                                                                                                                                                    
\end{proposition}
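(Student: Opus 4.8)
The plan is to verify the two assertions in turn, both of which reduce to elementary closure arguments. For the first claim, $\textnormal{Der}_{\theta,\mathbb{H}}(\mathbb{G})$ is by definition the intersection of $\textnormal{Der}_\theta(\mathbb{G})$ — already known to be a vector space — with the set $\{D\in\textnormal{End}(\mathbb{G})\mid D(\mathbb{H})\subseteq\mathbb{H}\}$. So I would first observe that the latter set is a subspace of $\textnormal{End}(\mathbb{G})$: if $D_1(\mathbb{H})\subseteq\mathbb{H}$ and $D_2(\mathbb{H})\subseteq\mathbb{H}$, then $(aD_1+bD_2)(h)=aD_1(h)+bD_2(h)\in\mathbb{H}$ for all $h\in\mathbb{H}$ and $a,b\in\mathbb{K}$, since $\mathbb{H}$ is a subspace. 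The intersection of two subspaces is a subspace, which gives the first statement.

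For the second claim, suppose $\mathbb{H}$ is a perfect ideal of $\mathbb{G}$, so $\mathbb{H}=\mathbb{H}^{(1)}=[\mathbb{H},\mathbb{H}]+\{\mathbb{H},\mathbb{H},\mathbb{H}\}$. The inclusion $\textnormal{Der}_{\theta,\mathbb{H}}(\mathbb{G})\subseteq\textnormal{Der}_\theta(\mathbb{G})$ is immediate, so it suffices to show that every $D\in\textnormal{Der}_\theta(\mathbb{G})$ stabilizes $\mathbb{H}$. I would take an arbitrary $h\in\mathbb{H}$ and, using perfection, write it as a finite sum of terms of the form $[h_1,h_2]$ and $\{h_1,h_2,h_3\}$ with all $h_j\in\mathbb{H}$. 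Applying the $G$-derivation identities,
\[
D([h_1,h_2])=[D(h_1),\theta(h_2)]+[\theta(h_1),D(h_2)],
\]
\[
D(\{h_1,h_2,h_3\})=\{D(h_1),\theta(h_2),\theta(h_3)\}+\{\theta(h_1),D(h_2),\theta(h_3)\}+\{\theta(h_1),\theta(h_2),D(h_3)\}.
\]
Now invoke that $\mathbb{H}$ is invariant under $\theta$, so each $\theta(h_j)\in\mathbb{H}$; and that $\mathbb{H}$ is an ideal of $\mathbb{G}$, so $[\mathbb{G},\mathbb{H}]\subseteq\mathbb{H}$, $[\mathbb{H},\mathbb{G}]\subseteq\mathbb{H}$, $\{\mathbb{G},\mathbb{H},\mathbb{G}\}\subseteq\mathbb{H}$, $\{\mathbb{G},\mathbb{G},\mathbb{H}\}\subseteq\mathbb{H}$. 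Each summand on the right-hand sides therefore lies in $\mathbb{H}$ (every bracket has at least one argument in $\mathbb{H}$ and the others in $\mathbb{G}$), so $D([h_1,h_2])\in\mathbb{H}$ and $D(\{h_1,h_2,h_3\})\in\mathbb{H}$. By linearity of $D$, $D(h)\in\mathbb{H}$ for the original $h$, hence $D(\mathbb{H})\subseteq\mathbb{H}$ and $D\in\textnormal{Der}_{\theta,\mathbb{H}}(\mathbb{G})$.

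There is no serious obstacle here; the only point requiring a little care is making sure that the ideal property of $\mathbb{H}$ is applied in the correct slots. In particular, for the trilinear term $\{D(h_1),\theta(h_2),\theta(h_3)\}$ the last two arguments already lie in $\mathbb{H}$, so the result lies in $\{\mathbb{G},\mathbb{H},\mathbb{H}\}\subseteq\{\mathbb{G},\mathbb{H},\mathbb{G}\}\subseteq\mathbb{H}$; for $\{\theta(h_1),D(h_2),\theta(h_3)\}$ one uses $\{\mathbb{H},\mathbb{G},\mathbb{H}\}\subseteq\mathbb{H}$ (noted in the preliminaries as a consequence of the ideal definition); and for $\{\theta(h_1),\theta(h_2),D(h_3)\}$ one uses $\{\mathbb{H},\mathbb{G},\mathbb{G}\}\subseteq\mathbb{H}$. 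Similarly the bilinear terms land in $[\mathbb{G},\mathbb{H}]$ and $[\mathbb{H},\mathbb{G}]$ respectively. Note that perfection of $\mathbb{H}$ is genuinely needed: without it there could be elements of $\mathbb{H}$ not expressible through the brackets, and a $\theta$-derivation need not preserve them.
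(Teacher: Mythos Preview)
Your approach matches the paper's exactly: verify the subspace property by closure under linear combinations, then for the second claim use perfection to write an arbitrary $h\in\mathbb{H}$ as a sum of brackets and triple brackets, apply the $\theta$-derivation identities, and conclude via the ideal property and $\theta$-invariance of $\mathbb{H}$. One minor correction: in the paper's convention the single-subscript $\textnormal{Der}_\theta(\mathbb{G})$ stands for $\textnormal{Der}_{\theta,1}(\mathbb{G})$, so the identities should read $D([h_1,h_2])=[D(h_1),\theta(h_2)]+[h_1,D(h_2)]$ and $D(\{h_1,h_2,h_3\})=\{D(h_1),\theta(h_2),h_3\}+\{h_1,D(h_2),\theta(h_3)\}+\{\theta(h_1),h_2,D(h_3)\}$; this does not affect your argument, since each summand still contains at least one entry from $\mathbb{H}$.
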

\begin{proof}
Here it is obvious that $\textnormal{Der}_{\theta,\mathbb{H}}(\mathbb{G})$ is a subset of $\textnormal{Der}_\theta(\mathbb{G})$. So it only remains to show that $\textnormal{Der}_{\theta,\mathbb{H}}(\mathbb{G})$ is a vector space itself. Now for $a,b\in \mathbb{K}$ and $D,T\in\textnormal{Der}_{\theta,\mathbb{H}}(\mathbb{G})$, we have $(aD+bT)(h)=aD(h)+bT(h)\in \mathbb{H}$ for all $h\in \mathbb{H}$. Since, $h$ was chosen arbitrarily, so $\textnormal{Der}_{\theta,\mathbb{H}}(\mathbb{G})$ is a subspace of $\textnormal{Der}_\theta(\mathbb{G})$. Now to prove the second part, it is enough to show $\textnormal{Der}_\theta(\mathbb{G})\subseteq \textnormal{Der}_{\theta,\mathbb{H}}(\mathbb{G})$. Let $D\in \textnormal{Der}_\theta(\mathbb{G})$ and $h\in \mathbb{H}$. Since $\mathbb{H}$ is perfect, so $\mathbb{H}^{(1)}=[\mathbb{H},\mathbb{H}]+\lbrace\mathbb{H},\mathbb{H},\mathbb{H}\rbrace=\mathbb{H}$. Now for any $h\in \mathbb{H}$, we can write $h= \sum_{i=1}^{n} [h_{1i},h_{2i}]+\sum_{i=1}^{n} \lbrace h_{3i},h_{4i},h_{5i}\rbrace$ for some $h_{1i},h_{2i},h_{3i},h_{4i},h_{5i} \in \mathbb{H}$; $1\leqslant i \leqslant n$. Since $\theta$  stabilizes $\mathbb{H}$, so we have,
\begin{align*}
D(h)&=\sum_{i=1}^{n} D([h_{1i},h_{2i}])+\sum_{i=1}^{n} D(\lbrace h_{3i},h_{4i},h_{5i}\rbrace)\\&=\sum_{i=1}^{n} ([D(h_{1i}),\theta(h_{2i})]+[h_{1i},D(h_{2i})])\\&~+\sum_{i=1}^{n}(\lbrace D(h_{3i}),\theta(h_{4i}),h_{5i}\rbrace+\lbrace h_{3i},D(h_{4i}),\theta(h_{5i})\rbrace+\lbrace \theta(h_{3i}),h_{4i},D(h_{5i})\rbrace) .
\end{align*} 

Now as $\mathbb{H}$ is an ideal, so $D(h)\in \mathbb{H}$ i.e., $D(\mathbb{H})\subseteq \mathbb{H}$ which asserts that $\textnormal{Der}_\theta(\mathbb{G})\subseteq \textnormal{Der}_{\theta,\mathbb{H}}(\mathbb{G})$. Therefore, $\textnormal{Der}_{\theta,H}(\mathbb{G})=\textnormal{Der}_\theta(\mathbb{G})$.                                                                                                                                                                    
\end{proof}

Since $\mathbb{H}$ is $\theta$-stable, so the automorphism $\theta$ restricts to an automorphism $\theta\mid_\mathbb{H}$ of $\mathbb{H}$ and also an arbitrary element $D\in\textnormal{Der}_{\theta,\mathbb{H}}(\mathbb{G})$ can be restricted to a $\theta\mid_\mathbb{H}$ -derivation of $\mathbb{H}$. This restriction map induces a natural linear map:
$$\delta:\textnormal{Der}_{\theta,\mathbb{H}}(\mathbb{G})\longrightarrow\textnormal{Der}_{\theta\mid_\mathbb{H}}(\mathbb{H})~~~~ {\rm{given~ by}}~~D\mapsto D\mid_\mathbb{H}.$$

For $D\in\textnormal{Der}_{\theta,\mathbb{H}}(\mathbb{G})$, we define a map $\overset{\frown}{D}:\mathbb{G}\longrightarrow \mathbb{G}$ by

$$\overset{\frown}{D}([g,h])= 2D([g,h])+[D(h),\theta(g)]+[\theta(h),D(g)],$$ and  $$\overset{\frown}{D}(\lbrace g,h,i\rbrace)=3D(\lbrace g,h,i\rbrace)+\lbrace D(g),\theta(h),i\rbrace+\lbrace g,D(h),\theta(i)\rbrace+\lbrace \theta(g),h,D(i)\rbrace.$$

Now we give some properties of the above map.

\begin{proposition}
Suppose the inner derivation $D_{g,h}\in \textnormal{Aut}(\mathbb{H})$ for some $g,h\in \mathbb{G}$. Then $\overset{\frown}{D}$ restricts to a homogeneous linear map $\overset{\frown}{D}\mid_H:H\longrightarrow H$.
\end{proposition}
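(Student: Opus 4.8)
The plan is to reduce the assertion to two elementary facts. First: on the linear span of ternary brackets the formula defining $\overset{\frown}{D}$ collapses, via the $\theta$-derivation identity for $D$, to the scalar multiple $4D$, so that it is well-defined there and depends only on the element, not on the entries chosen to present it. Second: the hypothesis $D_{g,h}\in\textnormal{Aut}(\mathbb{H})$ forces $\mathbb{H}$ to lie inside that span. Throughout, recall that the $D$ occurring in the definition of $\overset{\frown}{D}$ lies in $\textnormal{Der}_{\theta,\mathbb{H}}(\mathbb{G})\subseteq\textnormal{Der}_\theta(\mathbb{G})$, so $D(\mathbb{H})\subseteq\mathbb{H}$, $\theta(\mathbb{H})\subseteq\mathbb{H}$, and for all $a,b,c\in\mathbb{G}$
$$D(\{a,b,c\})=\{D(a),\theta(b),c\}+\{a,D(b),\theta(c)\}+\{\theta(a),b,D(c)\}.$$

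For the second fact, I would fix the elements $g,h\in\mathbb{G}$ furnished by the hypothesis. Since $D_{g,h}$ restricts to an automorphism, hence a linear bijection, of $\mathbb{H}$, it is in particular surjective on $\mathbb{H}$, so every $x\in\mathbb{H}$ can be written $x=\{g,h,x_0\}$ with $x_0\in\mathbb{H}$; thus $\mathbb{H}=\{g,h,\mathbb{H}\}\subseteq\{\mathbb{G},\mathbb{G},\mathbb{G}\}\subseteq\mathbb{G}^{(1)}$, which is the span where the formula for $\overset{\frown}{D}$ applies. For the first fact, given any element presented as a ternary bracket $\{a,b,c\}$, the definition gives
$$\overset{\frown}{D}(\{a,b,c\})=3D(\{a,b,c\})+\{D(a),\theta(b),c\}+\{a,D(b),\theta(c)\}+\{\theta(a),b,D(c)\},$$
and the last three summands are precisely the right-hand side of the $\theta$-derivation identity displayed above, so they add up to $D(\{a,b,c\})$; hence $\overset{\frown}{D}(\{a,b,c\})=4D(\{a,b,c\})$, an expression that manifestly depends only on the element $\{a,b,c\}$.

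Combining the two facts, $\overset{\frown}{D}\mid_\mathbb{H}=4\,D\mid_\mathbb{H}$. Since $D$ stabilizes $\mathbb{H}$, this yields $\overset{\frown}{D}(\mathbb{H})\subseteq\mathbb{H}$, and a scalar multiple of the linear map $D\mid_\mathbb{H}$ is again linear, so $\overset{\frown}{D}\mid_\mathbb{H}\colon\mathbb{H}\to\mathbb{H}$ is a homogeneous linear map, as claimed. The point I expect to need care is well-definedness: a priori the value $\overset{\frown}{D}(\{a,b,c\})$ is written in terms of the three entries rather than of the element they form, so one must know it is independent of the chosen presentation — and it is precisely in order to confine $\mathbb{H}$ to the ternary span, where the collapse to $4D$ removes this dependence, that the hypothesis on the inner derivation $D_{g,h}$ is invoked (no analogous reduction is available on binary brackets in general). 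Once these two observations are in place, only the one-line bracket computation above remains.
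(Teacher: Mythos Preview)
Your proof is correct, and the key observation that the last three summands in the defining formula for $\overset{\frown}{D}(\{a,b,c\})$ are exactly the right-hand side of the $\theta$-derivation identity, so that $\overset{\frown}{D}(\{a,b,c\})=4D(\{a,b,c\})$, is a genuine simplification over the paper's argument. The paper proceeds instead by writing each $x\in\mathbb{H}$ as $\{g,h,y\}$ via surjectivity of $D_{g,h}$, checking $\overset{\frown}{D}(x)\in\mathbb{H}$ term by term from the defining formula, and then verifying linearity by a direct expansion of $\overset{\frown}{D}(ax+bx')=\overset{\frown}{D}(\{g,h,ay+by'\})$ using the linearity of $D$, $\theta$, and the bracket in each slot; it never notices the collapse to $4D$. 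Your route makes well-definedness transparent (the value depends only on the element, not on its presentation as a bracket) and yields $\overset{\frown}{D}(\mathbb{H})\subseteq\mathbb{H}$ immediately from $D(\mathbb{H})\subseteq\mathbb{H}$, whereas the paper's term-by-term containment argument tacitly uses that terms like $\{D(g),\theta(h),y\}$ with $g,h\in\mathbb{G}$ and $y\in\mathbb{H}$ lie in $\mathbb{H}$, which really needs $\mathbb{H}$ to be an ideal rather than just a subalgebra. So your approach is both shorter and slightly more robust.
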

\begin{proof}
First observe that $\mathbb{H}=D_{g,h}(\mathbb{H})=\lbrace g,h,\mathbb{H}\rbrace\subseteq\lbrace\mathbb{G},\mathbb{G},\mathbb{G}\rbrace$. Thus for any $x\in \mathbb{H}$  there exists $y\in \mathbb{H}$ such that $x=\lbrace g,h,y\rbrace$  and $\overset{\frown}{D}(x)=\overset{\frown}{D}(\lbrace g,h,y\rbrace)=3D(\lbrace g,h,y\rbrace)+\lbrace D(g),\theta(h),y\rbrace+\lbrace g,D(h),\theta(y)\rbrace +\lbrace \theta(g),h,D(y)\rbrace \in D(\mathbb{H})+[\mathbb{G},\mathbb{G},\mathbb{H}]\subseteq \mathbb{H}$. Thus, $\overset{\frown}{D}(\mathbb{H})\subseteq \mathbb{H}$.  Now to check the map $\overset{\frown}{D}\mid_\mathbb{H}$ is linear, we take another element $x^\prime\in \mathbb{H}$ with $x^\prime=\lbrace g,h,y^\prime\rbrace$ for some $ y^\prime\in \mathbb{H}$. Then for $a,b\in\mathbb{F}$,
\begin{align*}
\overset{\frown}{D}(ax+bx^\prime) &=\overset{\frown}{D}(\lbrace g,h,ay\rbrace+\lbrace g,h,by^\prime\rbrace)
\\ &=\overset{\frown}{D}(\lbrace g,h,ay+by^\prime\rbrace)
\\ &=3D(\lbrace g,h,ay+by^\prime\rbrace)+\lbrace D(g),\theta(h),ay+by^\prime\rbrace+\lbrace g,D(h),\theta(ay+by^\prime)\rbrace
\\ &~+\lbrace \theta(g),h,D(ay+by^\prime)\rbrace
\\ &=3D(\lbrace g,h,ay\rbrace)+\lbrace D(g),\theta(h),ay\rbrace+\lbrace g,D(h),\theta(ay)\rbrace+\lbrace \theta(g),h,D(ay)\rbrace
\\ &~+3D(\lbrace g,h,by^\prime\rbrace)+\lbrace D(g),\theta(h),by^\prime\rbrace+\lbrace g,D(h),\theta(by^\prime)\rbrace+\lbrace \theta(g),h,D(by^\prime)\rbrace
\\ &=a\{3D(\lbrace g,h,y\rbrace)+\lbrace D(g),\theta(h),y\rbrace+\lbrace g,D(h),\theta(y)\rbrace+\lbrace \theta(g),h,D(y)\rbrace\}
\\ &~+b\{3D(\lbrace g,h,y^\prime\rbrace)+\lbrace D(g),\theta(h),y^\prime\rbrace+\lbrace g,D(h),\theta(y^\prime)\rbrace+\lbrace \theta(g),h,D(y^\prime)\rbrace\}
\\&=a\overset{\frown}{D}(x)+b\overset{\frown}{D}(x^\prime).
\end{align*}
 
Thus  $\overset{\frown}{D}$ is a linear map.
\end{proof}

\medskip

Let us denote the set of all quasi-derivations of $\mathbb{H}$ by $\textnormal{Qder}(\mathbb{H})$, then we have the following result.

\begin{proposition}
Suppose $D(g_1,g_2)\in \textnormal{Aut}(\mathbb{H})$ for some $g_1,g_2\in \mathbb{G}$ with $\theta(g_1)=g_1$ and $D(g_1),D(g_2)\in Z(\mathbb{G}) $ for all $D\in\textnormal{Der}_{\theta,\mathbb{H}}(\mathbb{G})$. Then $\delta(\textnormal{Der}_{\theta,\mathbb{H}}(\mathbb{G}))\subseteq\textnormal{Qder}(\mathbb{H})$. 
\end{proposition}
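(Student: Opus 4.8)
The plan is to fix an arbitrary $D\in\textnormal{Der}_{\theta,\mathbb{H}}(\mathbb{G})$ and produce linear maps $\bar D',\bar D''\in\textnormal{End}(\mathbb{H})$ witnessing that $\bar D:=\delta(D)=D\mid_{\mathbb{H}}$ is a quasi-derivation of $\mathbb{H}$; since $\delta(\textnormal{Der}_{\theta,\mathbb{H}}(\mathbb{G}))$ consists exactly of such restrictions, this gives the inclusion. Note that $\bar D$ is automatically a $(\theta\mid_{\mathbb{H}})$-derivation of $\mathbb{H}$, so the real content is to upgrade a $(\theta\mid_{\mathbb{H}})$-derivation to a quasi-derivation using the hypotheses on $g_1,g_2$. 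Put $\phi:=D_{g_1,g_2}$; by hypothesis $\phi$ restricts to an automorphism of $\mathbb{H}$, hence is bijective on $\mathbb{H}$, so every element of $\mathbb{H}$ can be written $\{g_1,g_2,y\}$ for some $y\in\mathbb{H}$; recall also that $\phi$ is a derivation of $\mathbb{H}$ by (LY5)--(LY6).

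The crux is a commutation identity: $D\mid_{\mathbb{H}}$ commutes with $\phi$. Indeed, for $y\in\mathbb{H}$ the trilinear relation defining a $\theta$-derivation gives
\[
D(\phi(y))=D(\{g_1,g_2,y\})=\{D(g_1),\theta(g_2),y\}+\{g_1,D(g_2),\theta(y)\}+\{\theta(g_1),g_2,D(y)\}.
\]
Since $D(g_1),D(g_2)\in Z(\mathbb{G})$ and a central element annihilates every triple product in every slot, the first two summands vanish; since $\theta(g_1)=g_1$, the third equals $\{g_1,g_2,D(y)\}=\phi(D(y))$. Hence $D\mid_{\mathbb{H}}\circ\,\phi=\phi\circ D\mid_{\mathbb{H}}$, so $D\mid_{\mathbb{H}}=\phi\circ D\mid_{\mathbb{H}}\circ\,\phi^{-1}$; the same collapse pins down the behaviour of the map $\overset{\frown}{D}$ of the previous proposition on triples $\{g_1,g_2,y\}$.

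With the commutation in hand, the natural candidate for the witnesses is $\bar D'=\bar D'':=\overset{\frown}{D}\mid_{\mathbb{H}}$, which by the previous proposition (applied with the pair $g_1,g_2$) is a genuine linear endomorphism of $\mathbb{H}$. One then has to check, for all $x,y,w\in\mathbb{H}$,
\[
[\bar D(x),y]+[x,\bar D(y)]=\bar D'([x,y]),\qquad
\{\bar D(x),y,w\}+\{x,\bar D(y),w\}+\{x,y,\bar D(w)\}=\bar D''(\{x,y,w\}).
\]
For the bilinear identity, combine $D([x,y])=[D(x),\theta(y)]+[x,D(y)]$ with its companion $D([x,y])=[D(x),y]+[\theta(x),D(y)]$, obtained from $[x,y]=-[y,x]$ and (LY1); adding these yields $[\bar D(x),y]+[x,\bar D(y)]=2D([x,y])+[D(y),\theta(x)]+[\theta(y),D(x)]$, which one rewrites via $[x,y]=\{g_1,g_2,y_0\}$, the commutation $D\phi=\phi D$, and the centrality of $D(g_1),D(g_2)$ in order to match the required value. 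For the trilinear identity, write $x,y,w$ as $\phi$-images, pull $\phi$ outside using that it is an algebra automorphism and commutes with $D$ on $\mathbb{H}$, and again feed in the trilinear $\theta$-derivation relations together with the centrality hypotheses.

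The step I expect to be the main obstacle is precisely this final verification: the $\theta$-twist in the $\theta$-derivation identities does not cancel on its own, and the correction terms it produces must be absorbed using the surjectivity of $\phi$ on $\mathbb{H}$ and the centrality of $D(g_1),D(g_2)$ — which is exactly why those hypotheses are imposed. If matching $\overset{\frown}{D}\mid_{\mathbb{H}}$ exactly fails, one still has freedom to adjust $\bar D',\bar D''$ off the derived algebra $\mathbb{H}^{(1)}$, since the quasi-derivation identities constrain them only on $[\mathbb{H},\mathbb{H}]$ and $\{\mathbb{H},\mathbb{H},\mathbb{H}\}$ respectively. In any case, once the commutation $D\phi=\phi D$ on $\mathbb{H}$ is secured, what remains is bookkeeping with (LY1)--(LY6).
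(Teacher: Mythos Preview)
Your approach is essentially the paper's: both first establish the commutation $D\circ D_{g_1,g_2}=D_{g_1,g_2}\circ D$ on $\mathbb{H}$ from the centrality of $D(g_1),D(g_2)$ together with $\theta(g_1)=g_1$, and then use the auxiliary map $\overset{\frown}{D}$ to produce the quasi-derivation witness for the bilinear identity (the paper, like you, does not write out the trilinear verification). The only cosmetic difference is that the paper routes the bilinear computation through the automorphism $\phi=D_{g_1,g_2}$ and records the witness as $\phi^{-1}\circ\overset{\frown}{D}\mid_{\mathbb{H}}\circ\phi$, whereas you symmetrize the $\theta$-derivation rule directly to land on $\overset{\frown}{D}\mid_{\mathbb{H}}$; since $D$ commutes with $\phi$ on $\mathbb{H}$, these two witnesses agree.
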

\begin{proof}
Let $h_1$ and $h_2$ be any arbitrary elements in $\mathbb{H}$, and $D\in \textnormal{Der}_{\theta,\mathbb{H}}(\mathbb{G})$. Since $D(g_1),D(g_2)\in Z(\mathbb{G})$, we have 
\begin{align*}
D\circ D(g_1,g_2)(h_1)  &=~D(\lbrace g_1,g_2,h_1\rbrace)    \\&=~\lbrace D(g_1),\theta(g_2),h_1\rbrace+\lbrace g_1,D(g_2),\theta(h_1)\rbrace+\lbrace \theta(g_1),g_2,D(h_1)\rbrace     \\&=~\lbrace\theta(g_1),g_2,D(h_1)\rbrace
\\&=~ \lbrace g_1,g_2,D(h_1)\rbrace \\&=~D(g_1,g_2)\circ D(h_1)
\end{align*} 


As $D(g_1,g_2)\in \textnormal{Aut}(\mathbb{H})$, we have 
\begin{align*}
\overset{\frown}{D} \mid_h\circ~ D(g_1,g_2)
([h_1,h_2])      &=\overset{\frown}{D}([D(g_1,g_2)(h_1),D(g_1,g_2)(h_2)])     
\\&=\overset{\frown}{D}([\lbrace g_1,g_2,h_1\rbrace,\lbrace g_1,g_2,h_2\rbrace])
\\&=2D([\lbrace g_1,g_2,h_1\rbrace,\lbrace g_1,g_2,h_2\rbrace])+[D(\lbrace g_1,g_2,h_2\rbrace),\theta(\lbrace g_1,g_2,h_1\rbrace)]
\\&~+[\theta(\lbrace g_1,g_2,h_2\rbrace),D(\lbrace g_1,g_2,h_1\rbrace)]
\\&=D([\lbrace g_1,g_2,h_1\rbrace,\lbrace g_1,g_2,h_2\rbrace])-[D(\lbrace g_1,g_2,h_1\rbrace),\theta(\lbrace g_1,g_2,h_2\rbrace)]
\\&~-D([\lbrace g_1,g_2,h_2\rbrace],\lbrace g_1,g_2,h_1\rbrace)+[D(\lbrace g_1,g_2,h_2\rbrace),\theta(\lbrace g_1,g_2,h_1\rbrace)]
\\&=[\lbrace g_1,g_2,h_1\rbrace,D(\lbrace g_1,g_2,h_2\rbrace)]+[D(\lbrace g_1,g_2,h_1\rbrace),(\lbrace g_1,g_2,h_2\rbrace)]
\\&=[D\circ D(g_1,g_2)(h_1),D(g_1,g_2)(h_2)]+[D(g_1,g_2)(h_1),D\circ D(g_1,g_2)(h_2)]
\\&=D(g_1,g_2)([D(h_1),h_2]+[h_1,D(h_2)])
\end{align*}
Thus, $[D(h_1),h_2]+[h_1,D(h_2)]=D(g_1,g_2)\circ\overset{\frown}{D}\mid_h\circ D(g_1,g_2)([h_1,h_2]).$ Clearly, $D$ restricts to a quasi-derivation of $\mathbb{H}$ and we obtain $\delta(\textnormal{Der}_{\theta,\mathbb{H}}(\mathbb{G}))\subseteq\textnormal{Qder}(\mathbb{H}).$
\end{proof}

\medskip

\medskip

\end{document}